 \newtheorem{thm}{Theorem}[section]
 \newtheorem{lem}[thm]{Lemma}
 \newtheorem{prop}[thm]{Proposition}
 \theoremstyle{definition}
 \newtheorem{defn}[thm]{Definition}
 \theoremstyle{remark}
 \newtheorem{rem}[thm]{Remark}
 \numberwithin{equation}{section}
\newtheorem{theorem}{Theorem}
\newtheorem{proposition}[theorem]{Proposition}
\newcommand{\trho}{\rho_s }
\newcommand{\dis}{\displaystyle}
\newcommand{\divv}{\text{\rm div}}
\newcommand{\R}{\mathbb R}
\newcommand{\intox}{\int_{\R^3}}
\newcommand{\intoxt}{\int_0^T\int_{\R^3}}
\newcommand{\supp}{\text{\rm supp }}
\newcommand{\intoxts}{\int_0^{t_2}\intox}
\numberwithin{equation}{section}
\numberwithin{theorem}{section}
\numberwithin{figure}{section}
\begin{document}

%
%
%
%
%
%
%
%
%








\title[Uniqueness of compressible flow with potential force]
 {Uniqueness of weak solutions of the three-dimensional compressible Navier-Stokes equations with potential force}
 
\author{Anthony Suen} 

\address{Department of Mathematics and Information Technology\\
The Education University of Hong Kong}

\email{acksuen@eduhk.hk}

\date{\today}

\keywords{Navier-Stokes equations; compressible flow; potential force; uniqueness}

\subjclass[2000]{35Q30} 

\begin{abstract}
We prove uniqueness of weak solutions of the three-dimensional compressible Navier-Stokes equations with potential force. We make use of the Lagrangean framework in comparing the instantaneous states of corresponding fluid particles in two different solutions. The present work provides qualitative results on how the weak solutions depend continuously on initial data and steady states.
\end{abstract}

\maketitle
\section{Introduction}

We are interested in the 3-D compressible Navier-Stokes equations with an external potential force in the whole space $\R^3$ ($j=1,2,3$):
\begin{align}\label{NS} 
\left\{ \begin{array}{l}
\rho_t + \divv (\rho u) =0, \\
(\rho u^j)_t + \divv (\rho u^j u) + (P)_{x_j} = \mu\,\Delta u^j +
\lambda \, (\divv \,u)_{x_j}  + \rho f^{j}.
\end{array}\right.
\end{align}
Here $x\in\R^3$ is the spatial coordinate and $t\ge0$ stands for the time. The unknown functions $\rho=\rho(x,t)$ and $u=(u^1,u^2,u^3)(x,t)$ represent the density and velocity vector in a compressible fluid. The function $P=P(\rho)$ denotes the pressure, $f=(f^1(x),f^2(x),f^3(x))$ is a prescribed external force and $\mu$, $\lambda$ are positive viscosity constants. The system \eqref{NS} is equipped with initial condition
\begin{equation}\label{IC}
(\rho(\cdot,0)-\trho,u(\cdot,0)) = (\rho_0-\trho,u_0),
\end{equation}
where the non-constant time-independent function $\rho_s =\rho_s (x)$ (known as the {\it steady state solution} to \eqref{NS}) can be obtained formally by taking $u\equiv0$ in \eqref{NS}:
\begin{align}\label{steady state}
\nabla P(\rho_s (x)) =\rho_s (x)f(x).
\end{align}

The well-posedness problem of the Navier-Stokes system \eqref{NS} is an important but challenging research topic in fluid mechanics, and we now give a brief review on the related results. The local-in-time existence of classical solution to the full Navier-Stokes equations was proved by Nash \cite{nash} and Tani \cite{tani}, and some Serrin type blow-up criteria for smooth solutions was recently obtained by Suen \cite{suen20c}. Later, Matsumura and Nishida \cite{mn1} obtained the global-in-time existence of $H^3$ solutions when the initial data was taken to be small with respect to $H^3$ norm, the results were then generalised by Danchin \cite{danchin} who showed the global existence of solutions in critical spaces. In the case of large initial data, Lions \cite{lions} obtained the existence of global-in-time finite energy weak solutions, yet the problem of uniqueness for those weak solutions remains completely open. In between the two types of solutions as mentioned above, a type of ``intermediate weak'' solutions were first suggested by Hoff in \cite{hoff95, hoff02, hoff05, hoff06} and later generalised by Matsumura and Yamagata in \cite{MY01}, Suen in \cite{suen13b, suen14, suen16} and other systems which include compressible magnetohydrodynamics (MHD) \cite{suenhoff12, suen12, suen20b}, compressible Navier-Stokes-Poisson system \cite{suen20a} and chemotaxis systems \cite{LS16}. Solutions as obtained in this intermediate class are less regular than those small-smooth type solutions obtained by Matsumura and Nishida \cite{mn1} and Danchin \cite{danchin}, which contain possible codimension-one discontinuities in density, pressure, and velocity gradient. Nevertheless, those intermediate weak solutions would be more regular than the large-weak type solutions developed by Lions \cite{lions}, hence the uniqueness and continuous dependence of solutions may be obtained; see \cite{hoff06} and the compressible MHD \cite{suen20b}. 

In this present work, we compare solutions of \eqref{NS}-\eqref{IC} from the class of intermediate weak solutions as mentioned above. Our results extend that of Cheung and Suen \cite{suen16}, which proved uniqueness of weak solution to \eqref{NS}-\eqref{IC} with H\"{o}lder continuous density function $\rho$. Such condition on $\rho$ is too strong in the sense that this would exclude solutions with codimension-one singularities, which are physically interesting features for the weak solutions; see \cite{hoff02} for a detailed discussion on the propagation of singularities. The main novelties of this current work can be summarised as follows:
\begin{itemize}
\item We provide detailed descriptions on the global-in-time existence and regularity of weak solution to \eqref{NS}-\eqref{IC}, see Theorem~\ref{existence of weak sol thm} in section~\ref{prelim section};
\item We remove the H\"{o}lder continuity restriction on density from \cite{suen16} which allows us to include a larger class of weak solutions;
\item We give a more precise results on how the weak solutions to \eqref{NS}-\eqref{IC} depend continuously on initial data and steady state solutions, see \eqref{bound on difference} in Theorem~\ref{Main thm}.
\end{itemize} 
 
We now give a precise formulation of our results. For $r\in(1,\infty]$, we define the following function spaces:
\begin{align*} 
\left\{ \begin{array}{l}
L^r=L^r(\R^3), D^{k,r}=\{u\in L^1_{loc}(\R^3):\|\nabla^k u\|_{L^r}<\infty\},\|u\|_{D^{k,r}}:=\|\nabla^k u\|_{L^r}\\
W^{k,r}=L^r\cap D^{k,r}, H^k=W^{k,2}.
\end{array}\right.
\end{align*}
We introduce the usual convective derivative $\frac{d}{dt}$ with respect to a velocity field $u$ as follows. For a given function $w:\R^3\times(0,T)\to\R$, we define
\begin{align}
\frac{d}{dt}(w)=\dot{w}:=w_t+u\cdot\nabla w,
\end{align}
where $\nabla w$ is the gradient of $w$. For $w:\R^3\times(0,T)\to\R^3$, we define
\begin{align}
\frac{d}{dt}(w)=\dot{w}:=w_t+\nabla w u,
\end{align}
where $\nabla w$ is the $3 \times 3$ matrix of partial derivatives of $w$.

We define the system parameters $P$, $f$, $\mu$, $\lambda$ as follows. For the pressure function $P=P(\rho)$ and the external force $f$, we assume that
\begin{align}\label{condition on P}
\mbox{$P(\rho)=a\rho$ with $a>0$;}
\end{align}
\begin{align}\label{condition on f}
\mbox{There exists $\psi\in H^2$ such that $f=\nabla\psi$ and $\psi(x)\to 0$ as $|x|\to\infty$.}
\end{align}
The viscosity coefficients $\mu$ and $\lambda$ are assumed to satisfy
\begin{align}\label{assumption on viscosity}
\lambda\ge0,\qquad \mu>0.
\end{align}

Next, we define $\rho_s $ as mentioned at the beginning of this section. Given a constant densty $\rho_{\infty}>0$, we say $(\rho_s ,0)$ is a steady state solution to \eqref{NS} if $\rho_s \in C^2(\R^3)$ and the following holds
\begin{align}
\label{eqn for steady state} \left\{ \begin{array}{l}
\nabla P(\rho_s (x)) =\rho_s (x)\nabla\psi(x), \\
\lim\limits_{|x|\rightarrow\infty}\rho_s (x) = \rho_{\infty}.
\end{array}\right.
\end{align}
By solving \eqref{eqn for steady state}, $\rho_s$ can be expressed explicitly as follows:
\begin{align}\label{expression for rho s}
\rho_s(x)=\rho_{\infty}\exp(\frac{1}{a}\psi(x)).
\end{align}
From now on, we choose $\rho_\infty\equiv1$ and take $\trho$ which satisfies \eqref{expression for rho s}. We also write $P_s=P(\rho_s)$ for simplicity.

We further introduce two important functions, namely the {\it effective viscous flux} $F$ and {\it vorticity} $\omega$, which are defined by
\begin{equation}\label{def of F and omega}
\rho_s  F=(\mu+\lambda)\divv\,u-(P(\rho)-P(\rho_s )),\qquad\omega=\omega^{j,k}=u^j_{x_k}-u^k_{x_j}.
\end{equation}
By the definitions of $F$ and $\omega$, and together with \eqref{NS}$_2$, $F$ and $\omega$ satisfy the elliptic equations
\begin{align}\label{elliptic eqn for F}
\Delta (\rho_s  F)=\divv(\rho \dot{u}-\rho f+\nabla P(\rho_s )),
\end{align}
\begin{align}\label{elliptic eqn for omega}
\mu\Delta\omega=\nabla\times(\rho \dot{u}-\rho f+\nabla P(\rho_s )).
\end{align}
The functions $F$ and $\omega$ play essential roles for studying intermediate weak solutions to compressible flows, see \cite{hoff95, hoff02, suenhoff12} for more detailed discussions.

Weak solutions to the system \eqref{NS}-\eqref{IC} can be defined as follows. We say that $(\rho,u,f,\rho_s)$ on $\R^3\times[0,T]$ is a {\it weak solution} of \eqref{NS}-\eqref{IC} if the following conditions hold:
\begin{equation}\label{condition on rho s}
\mbox{$\rho_s$ is a steady state solution to \eqref{eqn for steady state} which satisfies \eqref{expression for rho s};}
\end{equation}
\begin{equation}\label{condition on weak sol1}
\mbox{$\rho-\rho_s$ is a bounded map from $[0,T]$ into $L^1_{loc}\cap H^{-1}$ and $\rho\ge0$ a.e.;}
\end{equation}
\begin{equation}\label{condition on weak sol2}
\rho_0u_0\in L^2; \rho u,P-P_s,\nabla u,\rho f\in L^2(\R^3\times(0,T));\rho|u|^2\in L^1(\R^3\times(0,T));
\end{equation}
For all $t_2\ge t_1 \ge 0$ and $C^1$ test functions $\varphi$ which are Lipschitz on $\R^3\times[t_1,t_2]$ with $\supp\varphi(\cdot,t)\subset K$, $t\in[t_1,t_2]$, where $K$ is compact and
\begin{align}\label{WF1}
\left.\int_{\R^3}\rho(x,\cdot)\varphi(x,\cdot)dx\right|_{t_1}^{t_2}=\int_{t_1}^{t_2}\int_{\R^3}(\rho\varphi_t + \rho u\cdot\nabla\varphi)dxdt;
\end{align}
The weak form of the momentum equation
\begin{align}\label{WF2}
\left.\int_{\R^3}(\rho u^{j})(x,\cdot)\varphi(x,\cdot)dx\right|_{t_1}^{t_2}=&\int_{t_1}^{t_2}\int_{\R^3}[\rho u^{j}\varphi_t + \rho u^{j}u\cdot\nabla\varphi + (P-P_s)\varphi_{x_j}]dxdt\notag\\
& - \int_{t_1}^{t_2}\int_{\R^3}[\mu\nabla u^{j}\cdot\nabla\varphi + (\mu - \xi)(\divv\,u)\varphi_{x_j}]dxdt\\
&+ \int_{t_1}^{t_2}\int_{\R^3}(\rho f - \nabla P_s)\cdot\varphi dxdt.\notag
\end{align}
holds for test functions $\varphi$ which are locally Lipschitz on $\R^3 \times [0, T]$ and for which $\varphi,\varphi_t,\nabla\varphi \in L^2(\R^3 \times (0,T))$, $\nabla\varphi \in L^\infty(\R^3 \times (0,T))$, and $\varphi(\cdot,T) = 0$.

For the two solutions $(\rho,u,f,\rho_s)$ and $(\bar\rho,\bar{u},\bar{f},\bar{\rho_s})$ we compare, they will be assumed to satisfy
\begin{equation}\label{spaces for weak sol 1}
u,\bar{u}\in C(\R^3\times(0,T])\cap L^1((0,T);W^{1,\infty})\cap L^\infty_{loc}((0,T];L^\infty);
\end{equation}
\begin{equation}\label{spaces for weak sol 2}
\rho-\rho_s,\bar{\rho}-\bar{\rho_s},u,\bar{u},f,\bar{f}\in L^2(\R^2\times(0,T)).
\end{equation}
One of the solutions $(\rho,u,f,\rho_s)$ will have to satisfy
\begin{equation}\label{condition on weak sol3}
\|f\|_{L^\infty}<\infty,
\end{equation}
\begin{equation}\label{condition on weak sol4}
\rho,\rho^{-1}\in L^\infty(\R^3\times(0,T)),
\end{equation}
and 
\begin{equation}\label{condition on weak sol5}
\int_0^T\intox|u|^r dxdt<\infty
\end{equation}
for some $r>3$, and the other solution $(\bar\rho,\bar{u},\bar{f},\bar{\rho_s})$ will have to satisfy
\begin{equation}\label{condition on weak sol6}
\|\bar{\rho_s}\|_{L^\infty}+\|\nabla\bar{\rho_s}\|_{L^\infty}<\infty,
\end{equation}
\begin{align}\label{condition on weak sol8}
&\int_0^T[t\|\nabla\bar{F}(\cdot,t)\|^2_{L^2}+t\|\nabla\bar{\omega}(\cdot,t)\|^2_{L^2}+t^\alpha\|\nabla\bar{F}(\cdot,t)\|^{2\alpha}_{L^4}+t^\alpha\|\nabla\bar{\omega}(\cdot,t)\|^{2\alpha}_{L^4}]dt\notag\\
&\qquad+\int_0^T[\|\bar{u}(\cdot,t)\|^2_{L^\infty}+t\|\nabla\bar{u}(\cdot,t)\|^2_{L^\infty})dt<\infty,
\end{align}
where $\bar{F}$ and $\bar{\omega}$ are as in \eqref{elliptic eqn for F}-\eqref{elliptic eqn for omega} and $\alpha = \frac{4}{5}$; and
\begin{equation}\label{condition on weak sol9}
\bar{f}\in L^{2q},
\end{equation}
for some $q\in[1,\infty]$. Finally, we assume that
\begin{equation}\label{condition on weak sol10}
\rho_0-\bar{\rho}_0\in L^2\cap L^{2p},
\end{equation}
where $p$ is the H\"{o}lder conjugate of $q$.

\medskip

We are ready to state the following main results which are given in Theorem~\ref{Main thm}:

\begin{thm}\label{Main thm}
Given $a>0$, let $P$, $f$, $\lambda$, $\mu$ be the system parameters in \eqref{NS} satisfying \eqref{condition on P}-\eqref{assumption on viscosity}. Given $M$, $T$ and $r>3$, there is a positive constant $C$ depending on $a$, $M$, $T$ and $r$ such that if $(\rho,u,f,\rho_s)$ and $(\bar\rho,\bar{u},\bar{f},\bar{\rho_s})$ are weak solutions of \eqref{NS} satisfying \eqref{spaces for weak sol 1}-\eqref{spaces for weak sol 2} with $(\rho,u,f,\rho_s)$ satisfying \eqref{condition on weak sol3}-\eqref{condition on weak sol5} and $(\bar\rho,\bar{u},\bar{f},\bar{\rho_s})$ satisfying \eqref{condition on weak sol6}-\eqref{condition on weak sol9}, if \eqref{condition on weak sol10} holds, and if all the norms occurring in the above conditions are bounded by $M$, then 
\begin{align}\label{bound on difference}
&\left(\int_0^T|u-\bar{u}|^2dxdt\right)^\frac{1}{2}+\sup_{0\le \tau\le T}\|(\rho-\bar{\rho})(\cdot,t)\|_{H^{-1}}\notag\\
&\qquad\le C\left[\|\rho_0-\bar{\rho_0}\|_{L^2\cap L^{2p}}+\|\rho_0u_0-\bar{\rho_0}\bar{u_0}\|_{L^2}\right]\notag\\
&\qquad\qquad+C\left[\left(\intox|\rho_s-\bar{\rho_s}|^2\right)^\frac{1}{2}+\left(\intox|\bar{f}-\bar{f}\circ S|^2dxdt\right)^\frac{1}{2}\right],
\end{align}
where $S$ is defined in \eqref{def of S} later. If we further have $\int_0^Tt\|\nabla\bar{f}(\cdot,t)\|_{L^\infty}\le M$, then $\bar{f}\circ S$ may be replaced by $\bar{f}$ in \eqref{bound on difference}.
\end{thm}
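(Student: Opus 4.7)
The plan is to follow a Hoff-style Lagrangean comparison, adapted to the potential-force setting and to the reduced regularity (no Hölder continuity on $\rho$). Because \eqref{condition on weak sol3}--\eqref{condition on weak sol5} give $u\in L^r_tL^\infty_x$ with $r>3$, while $\nabla\bar u\in L^1_tL^\infty_x$ is encoded in \eqref{condition on weak sol8}, the ODE $\dot X=u(X,t)$, $X(y,0)=y$, admits a unique bi-Lipschitz flow $X$, and similarly for $\bar X$. One introduces the \emph{shift map}
$$S(x,t):=\bar X(X^{-1}(x,t),t),$$
and compares $\rho(x,t)$ with $\bar\rho(S(x,t),t)$ rather than with $\bar\rho(x,t)$. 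This Lagrangean substitution is what replaces the Hölder hypothesis of \cite{suen16}: the continuity equation becomes an ODE along trajectories, and no pointwise regularity of $\rho$ or $\bar\rho$ is ever invoked.

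First, I would derive an integral identity for $\rho-\bar\rho\circ S$ along the flow. Using $\dot\rho=-\rho\,\divv u$ together with the decomposition $\rho_s F=(\mu+\lambda)\divv u-(P-P_s)$ from \eqref{def of F and omega}, one integrates the resulting ODE for $\log\rho$ along $X$ and the analogous one for $\log\bar\rho$ along $\bar X$, subtracts, and takes $L^2$ and $L^{2p}$ norms. Combined with the elliptic bound on $F,\bar F$ afforded by \eqref{elliptic eqn for F}, this yields an inequality of the form
$$\|(\rho-\bar\rho\circ S)(\cdot,t)\|_{L^2\cap L^{2p}}\le\mathcal I_0+C\int_0^t\|(u-\bar u)(\cdot,s)\|_{L^2}\,ds+C\,\Delta_{s,f},$$
where $\mathcal I_0$ absorbs the initial density difference and $\Delta_{s,f}$ the steady-state and force differences on the right of \eqref{bound on difference}; the $L^{2p}$ component is there precisely to pair with \eqref{condition on weak sol9} via Hölder.

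Next comes the energy estimate for $u-\bar u$. Subtracting the weak formulations \eqref{WF2} for the two solutions and testing against $u-\bar u$ produces
$$\frac{d}{dt}\int\rho|u-\bar u|^2\,dx+\mu\int|\nabla(u-\bar u)|^2\,dx\lesssim R(t),$$
where $R(t)$ is a sum of commutator-type terms in $\rho-\bar\rho\circ S$, $\bar u-\bar u\circ S$, $P(\bar\rho)-P(\bar\rho)\circ S$, $\bar\rho_s-\bar\rho_s\circ S$ and $\bar f-\bar f\circ S$. Each commutator is controlled by $\|\mathrm{id}-S\|_{L^2}$, which itself satisfies $\tfrac{d}{dt}(x-S)=-(u-\bar u\circ S)=-(u-\bar u)-(\bar u-\bar u\circ S)$, weighted by the singular-in-time bounds on $\nabla\bar u,\nabla\bar F,\nabla\bar\omega$ from \eqref{condition on weak sol8}. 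The $H^{-1}$ estimate on $\rho-\bar\rho$ (Eulerian, not Lagrangean) is then obtained by testing the difference of \eqref{WF1} against an $H^1$ function, using \eqref{condition on weak sol4} and the $L^2_{t,x}$ bound on $u-\bar u$ just produced.

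The argument closes by summing the Lagrangean density estimate, the velocity energy estimate, and the estimate on $\|\mathrm{id}-S(\cdot,t)\|_{L^2}$, and applying Gronwall; the final assertion is just $\|\bar f-\bar f\circ S\|_{L^2}\le\|\nabla\bar f\|_{L^\infty}\|\mathrm{id}-S\|_{L^2}$. The main obstacle I anticipate is the bookkeeping in the energy step: one must match the time-weights $t$, $t^\alpha$ in \eqref{condition on weak sol8} to the singular-at-$t=0$ norms of $\nabla\bar F$ and $\nabla\bar\omega$ so that each commutator can be absorbed into the viscous dissipation, while simultaneously ensuring that no step secretly uses a pointwise regularity of $\rho$ incompatible with the codimension-one singularities allowed by our hypotheses.
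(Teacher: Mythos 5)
Your Lagrangean set-up (the flows $X,\bar X$, the shift map $S$, the control of $\|\mathrm{id}-S\|_{L^2}$ by $\int|u-\bar u\circ S|^2$, and the role of $L^2\cap L^{2p}$ paired with \eqref{condition on weak sol9}) matches the paper's framework. The gap is in your central step: you propose to subtract the two weak momentum equations and test against $u-\bar u$ to get a Gronwall inequality for $\int\rho|u-\bar u|^2$. This direct energy method does not go through for the class of solutions in the theorem, for two concrete reasons. First, $u-\bar u$ is not an admissible test function in \eqref{WF2}: the energy estimates \eqref{energy bound on weak sol} control $\dot u$ only with the weights $\tau^{1-s}$, $\tau^\sigma$, so $(u-\bar u)_t\notin L^2(\R^3\times(0,T))$ for general data, and the time-derivative pairings in your $\frac{d}{dt}\int\rho|u-\bar u|^2$ identity are not defined. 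Second, and more fundamentally, the pressure term $\int(P(\rho)-P(\bar\rho))\divv(u-\bar u)$ cannot be closed: since the two densities may carry jump discontinuities across \emph{different} surfaces, $\rho-\bar\rho$ is only small in $H^{-1}$ (Eulerian), never in $L^2$ --- this is exactly why the conclusion \eqref{bound on difference} is stated in $H^{-1}$. Pairing an $H^{-1}$ quantity with $\divv\varphi$ requires $D_x^2\varphi\in L^2_{t,x}$, which $u-\bar u$ does not have. Your Lagrangean density estimate for $\rho-\bar\rho\circ S$ does not rescue this, because the viscous term $\mu\Delta$ does not commute with composition by $S$, so a ``fully Lagrangean'' energy identity for $z=u-\bar u\circ S$ produces commutators involving second derivatives of $\bar u$ composed with $S$ that are not controlled by \eqref{condition on weak sol8} alone.

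The paper circumvents both obstructions by a duality argument in the style of Hoff \cite{hoff06}: one constructs $\psi^\varepsilon$ solving the \emph{backward} adjoint problem $\rho^\varepsilon(\psi^\varepsilon_t+u^\varepsilon\cdot\nabla\psi^\varepsilon)+\mu\Delta\psi^\varepsilon+\lambda\nabla\divv\psi^\varepsilon=G$ with $\psi^\varepsilon(\cdot,T)=0$, whose parabolic regularity \eqref{bound on psi 1} gives $\|D^2_x\psi^\varepsilon\|_{L^2_{t,x}}\le C\|G\|_{L^2_{t,x}}$ uniformly in $\varepsilon$; it then inserts $\psi^\varepsilon$ into the weak form for $(\rho,u)$ and $\psi^\varepsilon\circ S^{-1}$ into the weak form for $(\bar\rho,\bar u)$, so that the mismatch of the viscous terms is expressed through $\nabla(\bar\rho_s\bar F)$ and $\nabla\bar\omega$ (the terms $\mathcal R_1$, $\mathcal R_3$ in \eqref{estimate on weak form for u and rho 2}), which is precisely where the weighted $L^4$ hypotheses in \eqref{condition on weak sol8} enter. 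The outcome is $|\int z\cdot G|\le CM_0\|G\|_{L^2}$ for arbitrary $G$, hence $\|z\|_{L^2}\le CM_0$ by duality, followed by a small-time bootstrap to absorb the $T^\delta\|z\|_{L^2}$ self-term. To repair your proposal you would need to replace the energy step by this dual test-function construction (or supply substantially stronger hypotheses on $\dot u$ and on $\rho-\bar\rho$ than the theorem assumes).
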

\begin{rem}
Similar to the case as in Hoff \cite{hoff06}, under a more general condition on $P$, namely
\begin{equation}\label{more general condition on P}
\sup_{0\le t\le T}\Big\|\nabla\Big(\frac{P(\rho(\cdot,t))-P(\bar{\rho}(\cdot,t))}{\rho(\cdot,t)-\bar{\rho}(\cdot,t)}\Big)\Big\|_{L^3}<\infty,
\end{equation}
one can still obtain the same conclusion \eqref{bound on difference} from Theorem~\ref{Main thm}.
\end{rem}

The rest of the paper is organised as follows. In section~\ref{prelim section}, we recall some known facts and useful estimates, and we further discuss the global-in-time existence and regularity of weak solution to \eqref{NS}-\eqref{IC}. In section~\ref{proof of main thm section}, we address the uniqueness of weak solutions given in Theorem~\ref{Main thm} by making use of the Lagrangean framework and bounds on the weak solutions.

\section{Existence and regularity of weak solution}\label{prelim section}

In this section, we state some known facts and estimates which will be useful for later analysis. We also discuss the global-in-time existence and regularity of weak solution to \eqref{NS}-\eqref{IC} which will be summarised in Theorem~\ref{existence of weak sol thm}. 

To begin with, we state the following Gagliardo-Nirenberg type inequalities and the proof can be found in Ziemer \cite{ziemer}:

\begin{prop}
For $p\in[2,6]$, $q\in(1,\infty)$ and $r\in(3,\infty)$, there exists some generic constant $C>0$ such that for any $f\in H^1$ and $g\in L^q\cap D^{1,r}$, we have
\begin{align}
\|f\|^p_{L^p}&\le C\|f\|^\frac{6-p}{2}_{L^2}\|\nabla f\|^\frac{3p-6}{2}_{L^2},\label{GN1}\\
\|g\|_{L^\infty}&\le C\|g\|^\frac{q(r-3)}{3r+q(r-3)}_{L^2}\|\nabla g\|^\frac{3r}{3r+q(r-3)}_{L^2}.\label{GN2}
\end{align}
\end{prop}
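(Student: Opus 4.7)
The plan is to handle the two inequalities separately by standard interpolation arguments on $\R^3$. For \eqref{GN1}, I would combine the Sobolev embedding $H^1(\R^3)\hookrightarrow L^6(\R^3)$, which supplies $\|f\|_{L^6}\le C\|\nabla f\|_{L^2}$, with a straightforward H\"older interpolation of $L^p$ between $L^2$ and $L^6$. Writing $1/p=\theta/2+(1-\theta)/6$ forces $\theta=(6-p)/(2p)\in[0,1]$ for $p\in[2,6]$, so that $\|f\|_{L^p}\le\|f\|^\theta_{L^2}\|f\|^{1-\theta}_{L^6}$; raising to the $p$-th power and substituting the Sobolev bound reproduces exactly the exponents $(6-p)/2$ and $(3p-6)/2$ displayed in \eqref{GN1}.

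For \eqref{GN2}, my approach is a classical concentration/rescaling argument based on Morrey's embedding $W^{1,r}\hookrightarrow C^{0,1-3/r}$, which is valid precisely because $r>3$. First I would pick, for given $\varepsilon>0$, a point $x_0$ with $|g(x_0)|\ge\|g\|_{L^\infty}-\varepsilon$, then apply Morrey on a ball $B_R(x_0)$ to obtain $|g(x)|\ge\|g\|_{L^\infty}-\varepsilon-CR^{1-3/r}\|\nabla g\|_{L^r}$ for all $x\in B_R(x_0)$. Choosing $R$ so that $CR^{1-3/r}\|\nabla g\|_{L^r}=\tfrac14\|g\|_{L^\infty}$, namely $R\sim(\|g\|_{L^\infty}/\|\nabla g\|_{L^r})^{r/(r-3)}$, guarantees $|g|\ge\tfrac12\|g\|_{L^\infty}$ on $B_R$ after sending $\varepsilon\to 0$. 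Integrating this lower bound yields $\|g\|^q_{L^q}\gtrsim\|g\|^q_{L^\infty}R^3$; inserting the chosen $R$ and solving for $\|g\|_{L^\infty}$ produces precisely the exponents $q(r-3)/(3r+q(r-3))$ and $3r/(3r+q(r-3))$ appearing in \eqref{GN2}. (The $L^2$ subscripts on the right of \eqref{GN2} appear to be a typographical slip for $L^q$ and $L^r$, consistent both with the stated hypothesis $g\in L^q\cap D^{1,r}$ and with the scaling check below.)

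The principal, and essentially the only, technical point is to confirm that the Morrey constant in $|g(x)-g(x_0)|\le CR^{1-3/r}\|\nabla g\|_{L^r(B_R)}$ is uniform in $R$. This is most cleanly handled by rescaling $g_R(y):=g(x_0+Ry)$ and applying the fixed unit-scale Morrey inequality to $g_R$ on $B_1$, where the constant $C$ depends only on $r$ and the dimension. As a cross-check, the homogeneity of the final estimate dictates the exponents: under the dilation $g(x)\mapsto g(\lambda x)$, $\|g\|_{L^\infty}$ is invariant, $\|\nabla g\|_{L^r}$ scales like $\lambda^{1-3/r}$, and $\|g\|_{L^q}$ scales like $\lambda^{-3/q}$, so any scale-invariant inequality $\|g\|_{L^\infty}\le C\|g\|^a_{L^q}\|\nabla g\|^{1-a}_{L^r}$ requires $a(-3/q)+(1-a)(1-3/r)=0$, which is solved uniquely by $a=q(r-3)/(3r+q(r-3))$, in agreement with the exponents derived from the concentration argument.
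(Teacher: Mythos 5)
The paper does not actually prove this proposition --- it simply cites Ziemer's book --- so your argument should be judged as a self-contained replacement, and as such it is correct. For \eqref{GN1} your route (H\"older interpolation of $L^p$ between $L^2$ and $L^6$ plus the Sobolev embedding $\|f\|_{L^6}\le C\|\nabla f\|_{L^2}$ on $\R^3$) is the standard one, and the exponent bookkeeping $p\theta=(6-p)/2$, $p(1-\theta)=(3p-6)/2$ checks out. For \eqref{GN2} the concentration argument via the scale-invariant Morrey estimate is a clean and complete derivation, and your reading of the two $L^2$ subscripts on the right-hand side as typographical slips for $L^q$ and $L^r$ is not just plausible but forced: with both norms taken in $L^2$ the inequality fails under dilation (the right-hand side of $\|g\|_{L^\infty}\le C\|g\|_{L^2}^{a}\|\nabla g\|_{L^2}^{1-a}$ scales like $\lambda^{-a-1/2}\to0$ while the left-hand side is invariant), whereas with $L^q$ and $L^r$ the stated exponents are exactly the ones dictated by scaling and coincide with the form of the lemma in Hoff's papers that this one reproduces. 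Two small polish points, neither a gap: the $\varepsilon$-selection of $x_0$ tacitly assumes $\|g\|_{L^\infty}<\infty$, and it is cleaner to run the ball estimate at an arbitrary point $x_0$ of the continuous representative, obtaining $|g(x_0)|\le C\|g\|_{L^q}^{a}\|\nabla g\|_{L^r}^{1-a}$ directly and then taking the supremum; and the degenerate case $\nabla g\equiv0$ (whence $g\equiv0$ since $g\in L^q$) should be dispatched first so that the choice of $R$ is well defined.
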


Next, we recall the following lemma which gives some useful estimates on $u$ in terms of $F$ and $\omega$.
\begin{lem}
For $r_1,r_2\in(1,\infty)$ and $t>0$, there exists a universal constant $C$ which depends only on $r_1$, $r_2$, $\mu$, $\lambda$, $a$, $\gamma$ and $\trho$ such that, the following estimates hold:
\begin{align}\label{bound on F and omega in terms of u}
\|\nabla F\|_{L^{r_1}}+\|\nabla\omega\|_{L^{r_1}}&\le C(\|\rho^\frac{1}{2} \dot{u}\|_{L^{r_1}}+\|(\rho-\trho)\|_{L^{r_1}})
\end{align}
\begin{align}\label{bound on u in terms of F and omega}
||\nabla u(\cdot,t)||_{L^{r_2}}\le C(|| F(\cdot,t)||_{L^{r_2}}+||\omega(\cdot,t)||_{L^{r_2}}+||(\rho-\rho_s )(\cdot,t)||_{L^{r_2}}).
\end{align}
\end{lem}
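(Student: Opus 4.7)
The plan is to derive both estimates from Calder\'on--Zygmund singular-integral theory applied to the elliptic equations \eqref{elliptic eqn for F} and \eqref{elliptic eqn for omega}, combined with the standard Helmholtz-type decomposition of $u$ in $\R^3$.

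For \eqref{bound on F and omega in terms of u}, I first use the steady-state relation $\nabla P(\rho_s) = \rho_s f$ to rewrite the common forcing
\[
\rho\dot u - \rho f + \nabla P(\rho_s) = \rho\dot u - (\rho-\rho_s)f =: g,
\]
so that \eqref{elliptic eqn for F} reads $\Delta(\rho_s F) = \divv g$ and \eqref{elliptic eqn for omega} reads $\mu\Delta\omega = \nabla\times g$. Applying the $L^{r_1}$-boundedness of the Riesz-transform compositions $\nabla(-\Delta)^{-1}\divv$ and $\nabla(-\Delta)^{-1}\nabla\times$ (valid for $r_1\in(1,\infty)$) gives
\[
\|\nabla(\rho_s F)\|_{L^{r_1}} + \|\nabla\omega\|_{L^{r_1}} \le C\|g\|_{L^{r_1}}.
\]
Since $\rho$ is bounded I absorb one factor of $\rho^{1/2}$ into the constant to convert $\|\rho\dot u\|_{L^{r_1}}$ into $C\|\rho^{1/2}\dot u\|_{L^{r_1}}$, and since $f=\nabla\psi\in L^\infty$ (by $\psi\in H^2$ and the assumptions in force on $(\rho,u,f,\rho_s)$) I bound $\|(\rho-\rho_s)f\|_{L^{r_1}}\le C\|\rho-\rho_s\|_{L^{r_1}}$. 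Finally, the explicit formula \eqref{expression for rho s} provides uniform positive upper and lower bounds on $\rho_s$ together with a bound on $\nabla\rho_s$, which allow passing from $\nabla(\rho_s F)$ back to $\nabla F$.

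For \eqref{bound on u in terms of F and omega}, I use the vector identity $-\Delta u = -\nabla\divv u + \nabla\times\omega$. Given decay of $u$ at infinity, this yields the representation
\[
\nabla u = -\nabla(-\Delta)^{-1}\nabla\divv u + \nabla(-\Delta)^{-1}\nabla\times\omega,
\]
a composition of Riesz transforms acting on $\divv u$ and $\omega$, hence bounded on $L^{r_2}$ for $r_2\in(1,\infty)$. This delivers $\|\nabla u\|_{L^{r_2}} \le C(\|\divv u\|_{L^{r_2}} + \|\omega\|_{L^{r_2}})$. Substituting $\divv u = (\mu+\lambda)^{-1}[\rho_s F + (P(\rho)-P(\rho_s))]$ from \eqref{def of F and omega}, then using $\rho_s\in L^\infty$ and the linear pressure law \eqref{condition on P} so that $P(\rho)-P(\rho_s)=a(\rho-\rho_s)$, produces the desired bound.

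The main obstacle is the variable coefficient $\rho_s$ in the first estimate: differentiating $F = (\rho_s F)/\rho_s$ generates a term $\rho_s^{-2}(\nabla\rho_s)(\rho_s F)$ whose $L^{r_1}$ norm is controlled only by $\|F\|_{L^{r_1}}$ itself and not directly by the right-hand side of \eqref{bound on F and omega in terms of u}. I handle this by representing $\rho_s F$ via $(-\Delta)^{-1}\divv g$ and using the Sobolev/Hardy--Littlewood--Sobolev inequality together with the pointwise bounds on $\rho_s$ and $\nabla\rho_s$ from \eqref{expression for rho s}, which lets these extra contributions be absorbed into $C$ at the cost of $C$ depending on $\rho_s$ — a dependence explicitly permitted in the statement. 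The rest is careful bookkeeping of constants depending on $a,\mu,\lambda$ and $\rho_s$.
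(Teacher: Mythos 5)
Your argument is essentially the paper's own: the paper's proof of this lemma is a one-line appeal to ``standard $L^p$-estimates'' for the Poisson equations \eqref{elliptic eqn for F}--\eqref{elliptic eqn for omega}, with details deferred to the cited references, and your Calder\'on--Zygmund/Riesz-transform computation is precisely the content of that appeal. Your derivation of \eqref{bound on u in terms of F and omega} from $\Delta u^j=(\divv u)_{x_j}+\omega^{j,k}_{x_k}$ together with $\divv u=(\mu+\lambda)^{-1}[\rho_s F+a(\rho-\rho_s)]$ is complete, and your reduction of the forcing to $g=\rho\dot u-(\rho-\rho_s)f$ with $\|\nabla(\rho_s F)\|_{L^{r_1}}+\|\nabla\omega\|_{L^{r_1}}\le C\|g\|_{L^{r_1}}$ is exactly right.

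The one soft spot is the commutator term you yourself flag. Since $\nabla\rho_s=a^{-1}\rho_s\nabla\psi$ by \eqref{expression for rho s}, the extra term is $a^{-1}F\nabla\psi$, and the route you propose --- $\|\rho_s F\|_{L^{q}}\le C\|g\|_{L^{r_1}}$ with $1/q=1/r_1-1/3$ by Hardy--Littlewood--Sobolev, then H\"older against $\nabla\psi\in L^3$ (which follows from $\psi\in H^2$ in \eqref{condition on f}) --- closes the estimate only for $r_1\in(1,3)$; for $r_1\ge 3$ this pairing of exponents is unavailable, and a pointwise bound $\|\nabla\rho_s\|_{L^\infty}\|F\|_{L^{r_1}}$ reintroduces a norm of $F$ that is not on the right-hand side of \eqref{bound on F and omega in terms of u}. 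So as a proof of the inequality for \emph{all} $r_1\in(1,\infty)$ your sketch is incomplete; either the statement should carry a lower-order term in $F$, or it should be phrased for $\nabla(\rho_s F)$ rather than $\nabla F$. This is arguably an imprecision of the lemma itself rather than of your argument: in the proof of the main theorem the paper only ever uses the $L^4$ bound on $\nabla(\bar{\rho_s}\bar F)$ and $\nabla\bar\omega$, for which your first Calder\'on--Zygmund step already suffices and no commutator arises.
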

\begin{proof}
In view of the Poisson equations \eqref{elliptic eqn for F} and \eqref{elliptic eqn for omega}, we can apply standard $L^p$-estimate on $F$ and $\omega$ to obtain \eqref{bound on F and omega in terms of u}-\eqref{bound on u in terms of F and omega}; see \cite{suen13b, suen14, suen16}) for more details.
\end{proof}

Before we discuss the existence and some further properties of weak solution to \eqref{NS}-\eqref{IC}, we introduce the notion of {\it piecewise H\"{o}lder continuous} as follows (also refer to Hoff \cite{hoff02} for more details):

\begin{defn}\label{definition of piecewise C beta}
We say that a function $\phi(\cdot,t)$ is piecewise $C^{\beta(t)}$ if it has simple discontinuities across a $C^{1+\beta(t)}$ curve $\mathcal{C}(t):\mathcal{C}(t)=\{y(s,t):s\in I\subset\R\}$, where $\beta(t)>0$ is a function in $t$, $I$ is an open interval and the curve $\mathcal{C}(t)$ is the $u$-transport of $\mathcal C(0)$ given by:
$$y(s,t)=y(s,0)+\int_0^t u(y(s,\tau),\tau)d\tau.$$
Here $\mathcal{C}(0)$ is a $C^{\beta_0}$ curve with $\beta(0)=\beta_0>0$, which means that
\begin{equation*}
\mathcal{C}(0)=\{y_0(s):s\in\R\},
\end{equation*}
where $y(s,0)=y_0(s)$ is parameterised in arc length $s$ and $y_0$ is $C^{\beta_0}$.
\end{defn}

We now give the following theorem which gives the global-in-time existence and regularity of weak solution $(\rho,u,\rho_s)$ to \eqref{NS}-\eqref{IC}:

\begin{thm}\label{existence of weak sol thm}
Given $a>0$, let $P$, $f$, $\lambda$, $\mu$ be the system parameters in \eqref{NS} satisfying \eqref{condition on P}-\eqref{assumption on viscosity}. The system \eqref{NS}-\eqref{IC} has a global-in-time weak solution $(\rho,u,\rho_s)$ provided that
\begin{align}\label{smallness assumption} 
\left\{ \begin{array}{l}
\rho_0\ge0 \text{ a.e.},\qquad\rho_0\in L^\infty,\\
\dis{\intox\rho_0|u_0|^q dx<\infty},\\
\|\rho_0-\tilde\rho\|_{L^2}+\|\rho_0^\frac{1}{2}u_0\|_{L^{2}}\ll1,
\end{array}\right.
\end{align}
where $q>6$. The solution can be shown to satisfy conditions \eqref{condition on rho s}-\eqref{spaces for weak sol 2}, and if $\inf\rho_0>0$, then the solution further satisfies \eqref{condition on weak sol4}-\eqref{condition on weak sol5} and the energy estimates: for $u_0\in H^s$ for some $s\in[0,1]$, then it holds
\begin{align}\label{energy bound on weak sol}
&\sup_{0\le \tau\le T}\intox[\rho|u|^2+|\rho-\rho_s|^2+\tau^{1-s}|\nabla u|^2+\tau^\sigma|\dot{u}|^2)dx\notag\\
&\qquad+\int_0^T\intox(|\nabla u|^2+\tau^{1-s}|\dot{u}|^2+\tau^\sigma|\nabla\dot{u}|^2)dxd\tau\le C(T),
\end{align}
where $\sigma=\max\{2-s,3-3s\}$ and $C(T)$ is a generic positive constant which depends on $T$. Moreover, if $\rho_0$ is {\it piecewise} $C^{\beta_0}$ for some $\beta_0>0$ in the sense of Definition~\ref{definition of piecewise C beta}, then for each positive time $T$ and $t\in[0,T]$, there exists function $\beta(t)\in(0,\beta_0]$ such that $\rho$ is {\it piecewise} $C^{\beta(t)}$ on $[0,T]$.
\end{thm}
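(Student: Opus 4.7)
The plan is to follow the Hoff--Matsumura--Nishida framework adapted to potential forces as in \cite{MY01, suen16}. I would first construct a sequence of classical approximate solutions $(\rho^\es, u^\es)$ by mollifying the initial data so that $\rho_0^\es \ge \es > 0$, solve the regularised system on a maximal interval, and then extend to $[0,T]$ by means of uniform-in-$\es$ a priori bounds. The target is the estimate \eqref{energy bound on weak sol}, which simultaneously closes the continuation argument and supplies the compactness needed to pass to the limit.

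For the a priori bounds I would derive a hierarchy of weighted energy inequalities. Testing the momentum equation against $u$ and rewriting the force term via the identity $\rho f - \nabla P_s = (\rho-\rho_s)\nabla\psi$ coming from \eqref{steady state} yields the basic estimate
\[
\sup_{[0,T]}\intox\bigl(\rho|u|^2 + (\rho-\rho_s)^2\bigr)dx + \int_0^T\intox|\nabla u|^2\,dxdt \le C\bigl(\|\rho_0-\trho\|_{L^2}^2 + \|\rho_0^{1/2}u_0\|_{L^2}^2\bigr).
\]
Testing next against $\dot u$ and then against $t^{1-s}\dot u$ (and differentiating the momentum equation in time for the $\nabla\dot u$ bound), together with the elliptic estimates \eqref{bound on F and omega in terms of u}--\eqref{bound on u in terms of F and omega} for $F$ and $\omega$ and the Gagliardo--Nirenberg inequalities \eqref{GN1}--\eqref{GN2}, would give the weighted bounds on $\nabla u$, $\dot u$ and $\nabla\dot u$ appearing in \eqref{energy bound on weak sol}. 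The role of the smallness hypothesis \eqref{smallness assumption} is to let the cubic nonlinearities of type $\int\rho|u||\nabla u||\dot u|$ be absorbed into the dissipation.

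With these bounds uniform in $\es$, the passage to the limit proceeds as follows: $\rho^\es\rightharpoonup\rho$ weakly-$*$ in $L^\infty$, $u^\es\rightharpoonup u$ weakly in $L^2([0,T];H^1)$, and the strong convergence of $\rho^\es$ required to identify the pressure is obtained via the Lions-type weak continuity of the effective viscous flux, here in the weighted form $\rho_s F = (\mu+\lambda)\divv u - (P(\rho)-P(\rho_s))$ from \eqref{def of F and omega}. This produces a weak solution satisfying \eqref{condition on rho s}--\eqref{spaces for weak sol 2}. When $\inf\rho_0 > 0$, writing the continuity equation along Lagrangean trajectories $X(x,t)$ and substituting $\divv u = (\mu+\lambda)^{-1}\rho_s^{-1}\bigl(\rho_s F + P(\rho)-P(\rho_s)\bigr)$ from \eqref{def of F and omega} gives pointwise upper and lower bounds on $\rho$, and hence \eqref{condition on weak sol4}; then \eqref{condition on weak sol5} follows from \eqref{GN2} applied to $u$ with the $H^1 \cap W^{1,r}$ bounds from the previous step. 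The propagation of piecewise $C^{\beta(t)}$ regularity is then obtained by the argument of Hoff \cite{hoff02}: elliptic regularity for \eqref{elliptic eqn for F}--\eqref{elliptic eqn for omega} shows that $F$ and $\omega$ are H\"older continuous, so all jumps of $\nabla u$ across $\mathcal{C}(t)$ are entirely forced by jumps of $\rho$; integrating the continuity equation along characteristics crossing $\mathcal{C}(t)$ yields a linear ODE for the jump of $\log\rho$ whose coefficients inherit the regularity of $F$ and $\omega$, so that $\mathcal{C}(t)$ remains $C^{1+\beta(t)}$ with $\beta(t)\in(0,\beta_0]$.

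The hard step throughout is closing the weighted estimates using only the smallness of the perturbation from the steady state: unlike the forceless case, the force $f$ itself is not small, and every nonlinearity has to be rewritten as a perturbation of $\rho_s$ by invoking \eqref{steady state} and the explicit expression \eqref{expression for rho s}, so that $\rho f - \nabla P_s$ vanishes to first order near $\rho_s$. This linearisation about the steady state is where the potential-force assumption \eqref{condition on f} is used in an essential way, and it is also what dictates the weighted structure in time of \eqref{energy bound on weak sol}.
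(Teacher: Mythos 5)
Your overall architecture agrees with the paper's: the paper does not reprove existence or the energy hierarchy but delegates them to \cite{suen13b, suen14, suen16} (for \eqref{condition on rho s}--\eqref{spaces for weak sol 2}, for \eqref{condition on weak sol4}--\eqref{condition on weak sol5}, and for \eqref{energy bound on weak sol} with $s=0$) and to the interpolation technique of Hoff \cite{hoff02} for $s\in(0,1]$. Your reconstruction of that part --- approximate solutions with $\rho_0^\es\ge\es$, the weighted energy hierarchy closed by smallness of the perturbation from $\rho_s$, compactness via the effective viscous flux, pointwise density bounds from the continuity equation along trajectories --- is consistent with what those references do; note only that the fractional weights $\tau^{1-s}$, $\tau^\sigma$ are obtained in the paper by interpolating between the endpoint cases $s=0$ and $s=1$ rather than by testing directly against $t^{1-s}\dot u$.

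The genuine gap is in the piecewise $C^{\beta(t)}$ propagation, which is the only part the paper actually argues. You deduce the $C^{1+\beta(t)}$ regularity of the transported curve $\mathcal{C}(t)$ and the H\"{o}lder modulus of $\rho$ off the curve from the H\"{o}lder continuity of $F$ and $\omega$ alone. That is not enough: $\mathcal{C}(t)$ is the image of $\mathcal{C}(0)$ under the flow of $u$, and in this solution class $\nabla u\notin L^1((0,T);L^\infty)$ a priori, because the component $u_P$ of the decomposition \eqref{decomposition of u} solves $(\mu+\lambda)\Delta (u_P)^{j}=(P-P_s)_{x_j}$ with a right-hand side that is merely bounded and discontinuous, so $\nabla u_P$ lies only in $BMO$. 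The missing ingredient is the Bahouri--Chemin estimate \cite{BC94}: $u_P$ is log-Lipschitz with bounded seminorm, which combined with \eqref{bound on nabla uF} yields H\"{o}lder continuity (with a time-decaying exponent, whence $\beta(t)\le\beta_0$) of the integral curves $x(y,t)$ in $y$; only then does integrating the mass equation along two such curves, as in \eqref{Holder norm estimate of rho}, produce the H\"{o}lder modulus of $\log\rho(\cdot,t)$ and the regularity of $\mathcal{C}(t)$. Without the log-Lipschitz step, your ``linear ODE for the jump of $\log\rho$'' has no sufficiently regular characteristics to integrate along, and the claimed exponent $\beta(t)\in(0,\beta_0]$ cannot be extracted from the regularity of $F$ and $\omega$ alone.
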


\begin{proof}
The existence result follows mainly from Suen \cite{suen13b, suen14, suen16}, which show that the global-in-time weak solution $(\rho,u,\rho_s)$ satisfies \eqref{condition on rho s}-\eqref{spaces for weak sol 2}. The bounds \eqref{condition on weak sol4}-\eqref{condition on weak sol5} and the energy estimates \eqref{energy bound on weak sol} for $s=0$ follow by \cite[Theorem~1.1]{suen16}, and the general cases for $s\in(0,1]$ can be proved by the interpolation techniques of Hoff \cite{hoff02} or Suen \cite{suen20b} provided that $\inf \rho_0>0$. 

To prove that $\rho$ is piecewise $C^{\beta(t)}$ on $[0,T]$ for the case when $\rho_0$ is piecewise $C^{\beta_0}$, it involves an argument which is based on the observation of ``enhanced regularity" gained by the effective viscous flux $F$. Details of the proof can be found in \cite{hoff02, suen20b, suen20mhd} and we only give a sketch here. We introduce a decomposition of $u$ which is given by $u=u_{F}+u_{P}$, where $u_{F}$, $u_{P}$ satisfy
\begin{align}\label{decomposition of u}
\left\{
 \begin{array}{lr}
(\mu+\lambda)\Delta (u_{F})^{j}=(\rho_sF)_{x_j} +(\mu+\lambda)(\omega)^{j,k}_{x_k}\\
(\mu+\lambda)\Delta (u_P)^{j}=(P-P_s)_{x_j}.\\
\end{array}
\right.
\end{align}
Using the estimates \eqref{bound on F and omega in terms of u} on $F$ and $\omega$, and together with \eqref{energy bound on weak sol}, we readily have
\begin{align}\label{bound on nabla uF}
\int_{0}^{T}||\nabla u_{F}(\cdot,\tau)||_{\infty}d\tau\le C(T).
\end{align}
On the other hand, in order to control $u_P$, by applying the results from Bahouri-Chemin \cite{BC94} on Newtonian potential, we can make use of the pointwise bounds \eqref{condition on weak sol4} on $\rho$ to show that $u_P$ is, in fact, log-Lipschitz with bounded log-Lipschitz seminorm. This is sufficient to guarantee that the integral curve $x(y,t)$ as defined by 
\begin{align*}
\left\{ \begin{array}
{lr} \dot{x}(t)
=u(x(t),t)\\ x(0)=y,
\end{array} \right.
\end{align*}
is H\"{o}lder-continuous in $y$. Upon integrating the mass equation along integral curves $x(t,y)$ and $x(t,z)$, subtracting and recalling the definition \eqref{def of F and omega} of $F$, we arrive at 
\begin{align}\label{Holder norm estimate of rho}
\log\rho&(x(T,y),T)-\log\rho(x(T,z),T)\notag\\
&=\log\rho_0(y)-\log\rho_0(z)
+\int_0^T [P(\rho(x(\tau,y),\tau)-P(\rho(x(\tau,z),\tau)]d\tau\notag \\
&\qquad\qquad\qquad\qquad\qquad\qquad\quad+\int_0^t[F(x(\tau,y),\tau)-F(x(\tau,z),\tau)]d\tau.
\end{align}
Since $P$ is increasing, the second term on the right side of the above can be dropped out. Moreover, with the help of the estimate \eqref{bound on F and omega in terms of u} on $F$ and the H\"{o}lder-continuity of $x(y,t)$, the third term can be bounded by $M$. Hence we can conclude from \eqref{Holder norm estimate of rho} that $\rho(\cdot,t)$ is $C^{\beta(t)}$ on $[0,T]$ for some $\beta(t)\in(0,\beta_0]$ with bounded modulus.
\end{proof}
\begin{rem}\label{explanation on the time integral bound on nabla u}
Under the assumption that the initial density $\rho_0$ is piecewise H\"{o}lder continuous, by Theorem~\ref{existence of weak sol thm}, we can see that $\rho$ is piecewise H\"{o}lder continuous. As a consequence, it further implies that $\nabla u\in L^1((0,T);W^{1,\infty})$. To see how it works, we make use of the Poisson equation \eqref{decomposition of u}$_2$ again and apply properties of Newtonian potentials to conclude that the $C^{1+\beta(t)}(\R^3)$ norm of $u_P$ remains finite in finite time, hence the following bound holds for $\nabla u_P$ as well:
\begin{align}\label{bound on nabla uP}
\int_{0}^{T}||\nabla u_{P}(\cdot,\tau)||_{\infty}d\tau\le C(T).
\end{align}
Together with the bound \eqref{bound on nabla uF} on $u_F$, we conclude that condition \eqref{spaces for weak sol 1} holds for the weak solution to \eqref{NS}-\eqref{IC} with piecewise H\"{o}lder continuous initial density. The results of Theorem~\ref{Main thm} therefore do apply to this class of weak solutions, which includes solutions with Riemann-like initial data.
\end{rem}

We end this section by stating some results on Lagrangean structure which will be used in this paper. As suggested by Hoff in \cite{hoff06}, weak solutions with minimal regularity are best compared in a Lagrangian framework. In other words, we try to compare the instantaneous states of corresponding fluid particles in two different solutions. To achieve our goal, we employ some delicate estimates on {\it particle trajectories}. More precisely, for $T>0$, the bound \eqref{bound on nabla uF} and \eqref{bound on nabla uP} guarantee the existence and uniqueness of the mapping $X(y,t,t')\in C(\R^3\times[0,T]^2)$ satisfying
\begin{align}\label{integral curve X}
\left\{ \begin{array}
{lr} \dis\frac{\partial X}{\partial t}(y,t,t')
=u(X(y,t,t'),t)\\ X(y,t',t')=y
\end{array} \right.
\end{align}
where $(\rho,u,B)$ is a weak solution to \eqref{NS}-\eqref{IC}. Moreover, the mapping $X(\cdot,t,t')$ is Lipschitz on $\R^3$ for $(t,t')\in[0,T]^2$. The results are given in the following proposition and the proof can be found in Hoff \cite{hoff06}.
\begin{proposition}\label{prop on X}
Let $T>0$ and $u$ satisfy \eqref{spaces for weak sol 1}. Then there is a unique function $X\in C(\R^3\times[0,T]^2$) satisfying \eqref{integral curve X}. In particular, $X(\cdot,t,t')$ is Lipschitz on $\R^3$ for $(t,t')\in[0,T]^2$, and there is a constant $C$ such that
\begin{equation*}
\Big\|\frac{\partial X}{\partial y}(\cdot,t,t')\Big\|_{L^\infty}\le C,\qquad (t,t')\in[0,T]^2.
\end{equation*}
\end{proposition}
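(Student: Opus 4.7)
The plan is to convert the initial value problem \eqref{integral curve X} into the integral equation
\begin{equation*}
X(y,t,t') = y + \int_{t'}^t u(X(y,s,t'),s)\,ds
\end{equation*}
and solve it by a time-dependent Picard iteration. The decisive feature of hypothesis \eqref{spaces for weak sol 1} is that $\nabla u \in L^1((0,T); L^\infty)$, so the Lipschitz seminorm $L(s):=\|\nabla u(\cdot,s)\|_{L^\infty}$ is an integrable function on $(0,T)$, while $u \in L^\infty_{\loc}((0,T]; L^\infty)$ together with the $L^1$-in-time $L^\infty$ bound on $u$ itself supplies enough integrability of the driving field. These two ingredients place the problem squarely in Carath\'eodory-type ODE theory, where pointwise-in-time continuity of the vector field is not needed; only measurability plus an $L^1$-in-time spatial Lipschitz bound is required.

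Given this, I would execute the argument in three steps. First, for each fixed $(y,t')$, I run a Picard iteration on $C([0,T];\R^3)$ equipped with the weighted supremum norm $\|Y\|_* := \sup_{t}|Y(t)|\exp(-\kappa\int_0^t L(s)\,ds)$ for $\kappa$ sufficiently large; the $L^1$-Lipschitz bound turns the Picard map into a strict contraction and produces a unique fixed point $X(y,\cdot,t')$. Uniqueness on the whole interval also follows directly from the Gronwall inequality
\begin{equation*}
|X_1(t)-X_2(t)| \le \Big|\int_{t'}^t L(s)|X_1(s)-X_2(s)|\,ds\Big|.
\end{equation*}
Second, applying the same Gronwall argument to two trajectories starting at $y_1,y_2$ at the same initial time $t'$ yields
\begin{equation*}
|X(y_1,t,t')-X(y_2,t,t')| \le |y_1-y_2|\exp\Big(\int_0^T L(s)\,ds\Big),
\end{equation*}
which simultaneously gives a global Lipschitz constant on $\R^3$ and, via Rademacher's theorem, the pointwise a.e.\ bound on $\partial X/\partial y$. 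Third, continuity of $X$ jointly in $(y,t,t')$ follows from: Lipschitz continuity in $y$ (just established); absolute continuity in $t$ from the integral equation and the integrability of $\|u(\cdot,s)\|_{L^\infty}$; and continuity in $t'$ by writing $X(y,t,t'_1) - X(y,t,t'_2) = -\int_{t'_1}^{t'_2} u(X(y,s,t'_1),s)\,ds$ plus a remainder term controlled by yet another $L^1$-Gronwall estimate.

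The principal technical obstacle lies at the endpoint $t=0$, where $u$ is only assumed to be in $L^\infty_{\loc}((0,T];L^\infty)$ rather than continuous on all of $[0,T]$, so a naive Cauchy--Lipschitz invocation demanding joint continuity is unavailable. The resolution, already built into the plan above, is to work entirely at the level of the integral equation and to exploit the fact that $L^1$-in-time Lipschitz spatial regularity is sufficient both for existence (via the weighted-norm contraction) and for uniqueness (via Gronwall), so no pointwise-in-time regularity of $u$ at $t=0$ is required. A minor secondary point is that the stated bound on $\partial X/\partial y$ is an a.e.\ statement obtained from the Lipschitz constant on $X(\cdot,t,t')$; classical differentiability of the flow in $y$ would require more regularity on $u$ than is assumed, but the proposition only asserts the $L^\infty$ bound, which is exactly what Rademacher plus the Gronwall estimate delivers.
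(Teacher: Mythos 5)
Your proposal is correct, and it coincides with the intended argument: the paper itself offers no proof of this proposition beyond deferring to Hoff \cite{hoff06}, and the Carath\'eodory-type reduction to the integral equation followed by a weighted-norm Picard contraction and Gronwall estimates (using only that $\|\nabla u(\cdot,s)\|_{L^\infty}$ and $\|u(\cdot,s)\|_{L^\infty}$ are integrable in time, so no regularity of $u$ at $t=0$ is needed) is precisely the standard proof underlying that citation. Your handling of the endpoint $t=0$ and of the a.e.\ interpretation of $\partial X/\partial y$ via Rademacher is exactly right.
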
 
With respect to velocities $u$ and $\bar{u}$, for $y\in\R^3$, we let $X$, $\bar{X}$ be two integral curves given by
\begin{align*}
\left\{ \begin{array}
{lr} \dis\frac{\partial X}{\partial t}(y,t,t')
=u(X(y,t,t'),t)\\ X(y,t',t')=y
\end{array} \right.
\end{align*}
and
\begin{align*}
\left\{ \begin{array}
{lr} \dis\frac{\partial \bar{X}}{\partial t}(y,t,t')
=\bar{u}(\bar{X}(y,t,t'),t)\\ \bar{X}(y,t',t')=y.
\end{array} \right.
\end{align*}
We then define $S(x,t)$, $S^{-1}(x,t)$ by
\begin{equation}\label{def of S}
S(x,t)=\bar{X}(X(x,0,t),t,0),
\end{equation}
and 
\begin{equation}\label{def of S-1}
S^{-1}(x,t)=X(\bar{X}(x,0,t),t,0).
\end{equation}
The following proposition provides some properties of $S$ and $S^{-1}$, which will be crucial for later analysis.
\begin{proposition}\label{prop on S}
Let $S$ and $S^{-1}$ be as given in \eqref{def of S}-\eqref{def of S-1}. Then we have
\begin{itemize}
\item $S^{\pm1}$ is continuous on $R^3\times[0,T]$ and Lipschitz continuous on $R^3\times[\tau,T]$ for all $\tau>0$, and there is a constant C such that $$\|\nabla S^{\pm1}(\cdot,t)\|_{L^\infty}\le C,\qquad t\in[0,T];$$
\item $(S_t+\nabla S u)(x,t)=\bar{u}(S(x,t),t)$ a.e. in $\R^3\times(0,T);$
\item $\bar{\rho}(S(x,t),t)\rho_0(X(x,0,t))\det\nabla S(x,t)=\rho(x,t)\bar{\rho}_0(X(x,0,t))$ a.e. in $\R^3\times(0,T)$;
\item If $u, u \in L^2(\R^3 \times (0,T))$, then for all $t\in(0,T)$,
\begin{align}
\intox|x-S(x,t)|^2dx&\le Ct\int_0^t\intox|u(x,\tau)-\bar{u}(S(x,\tau),\tau)|^2dxd\tau,\label{bound on S}\\
\intox|x-S^{-1}(x,t)|^2dx&\le Ct\int_0^t\intox|u(S^{-1}(x,\tau),\tau)-\bar{u}(x,\tau)|^2dxd\tau.\label{bound on S -1}
\end{align}
\end{itemize}
\end{proposition}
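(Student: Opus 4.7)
My plan is to prove the four items in order, leveraging Proposition~\ref{prop on X} applied to both velocities $u$ and $\bar u$.

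For the regularity assertion, I would note that by Proposition~\ref{prop on X} the maps $x\mapsto X(x,0,t)$ and $y\mapsto\bar X(y,t,0)$ are each uniformly Lipschitz on $\R^3$ with bounds independent of $t$. Writing $S=\bar X(\,\cdot\,,t,0)\circ X(\,\cdot\,,0,t)$, the chain rule gives
\begin{equation*}
\nabla_x S(x,t)=\bigl[\nabla_y\bar X(y,t,0)\bigr]_{y=X(x,0,t)}\cdot\nabla_x X(x,0,t),
\end{equation*}
which is uniformly bounded in $L^\infty$; the identical argument handles $S^{-1}$. Joint continuity in $(x,t)$ is inherited from that of $X$ and $\bar X$.

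For the transport identity, I would fix $x_0\in\R^3$ and follow the $u$-trajectory $\phi(t)=X(x_0,t,0)$. The flow property $X(\phi(t),0,t)=x_0$ forces $S(\phi(t),t)=\bar X(x_0,t,0)$. Differentiating, the right-hand side equals $\bar u(\bar X(x_0,t,0),t)=\bar u(S(\phi(t),t),t)$, while the chain rule on the left-hand side gives $S_t(\phi(t),t)+\nabla S(\phi(t),t)\,u(\phi(t),t)$; equating and varying $x_0$ yields the claimed identity almost everywhere.

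For the Jacobian identity, I would use the Lagrangean form of each continuity equation. Integrating $\rho_t+\divv(\rho u)=0$ along $u$-trajectories yields $\rho_0(X(x,0,t))\det\nabla_x X(x,0,t)=\rho(x,t)$, and the analogous formula for $\bar\rho$, evaluated at $y=X(x,0,t)$, reads
\begin{equation*}
\bar\rho(S(x,t),t)\,\det\nabla_y\bar X(y,t,0)\bigl|_{y=X(x,0,t)}=\bar\rho_0(X(x,0,t)).
\end{equation*}
Multiplying through by $\rho_0(X(x,0,t))$ and using the factorization $\det\nabla_x S(x,t)=\det\nabla_y\bar X|_{y=X(x,0,t)}\cdot\det\nabla_x X(x,0,t)$ together with the first formula collapses the product into the stated identity.

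For the $L^2$ comparison bounds, I would substitute $x=X(y,t,0)$, under which $S(x,t)$ becomes $\bar X(y,t,0)$, and write
\begin{equation*}
X(y,t,0)-\bar X(y,t,0)=\int_0^t\bigl[u(X(y,\tau,0),\tau)-\bar u(\bar X(y,\tau,0),\tau)\bigr]d\tau.
\end{equation*}
Cauchy--Schwarz in $\tau$, integration in $y$, and the reverse change of variable $x=X(y,\tau,0)$ inside the time integral (whereby $\bar u(\bar X(y,\tau,0),\tau)=\bar u(S(x,\tau),\tau)$) produce \eqref{bound on S}; the bound \eqref{bound on S -1} follows by swapping the roles of $u$ and $\bar u$. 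The chief technical point throughout is the uniform control of the Jacobians $\det\nabla_x X$, $\det\nabla_y\bar X$, and their reciprocals during these changes of variables; this is secured by the $L^1((0,T);W^{1,\infty})$ regularity of $u,\bar u$ in \eqref{spaces for weak sol 1}, which via the ODE $\partial_t\det\nabla_y X=(\divv u)\det\nabla_y X$ along trajectories keeps both the forward and inverse Jacobians uniformly bounded on $\R^3\times[0,T]$.
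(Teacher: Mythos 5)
The paper does not actually prove this proposition: it simply refers the reader to page 1752 of Hoff \cite{hoff06}. Your write-up supplies a self-contained argument, and it is essentially the standard one from that reference, so in substance you are reproducing the cited proof rather than taking a different route. The four steps are all sound: the chain-rule composition for the gradient bound, the trajectory identity $S(X(x_0,t,0),t)=\bar X(x_0,t,0)$ for the transport equation, the factorization $\det\nabla_x S=\det\nabla_y\bar X\cdot\det\nabla_x X$ combined with the two Lagrangean mass identities, and the Cauchy--Schwarz estimate on $X(y,t,0)-\bar X(y,t,0)$ followed by the change of variables $x=X(y,\tau,0)$; you also correctly identify the uniform two-sided Jacobian control via $\partial_t\det\nabla_y X=(\divv u)\det\nabla_y X$ and $\nabla u\in L^1((0,T);L^\infty)$ as the load-bearing technical input.

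Two points deserve a sentence more than you give them. First, the proposition asserts Lipschitz continuity of $S^{\pm1}$ on $\R^3\times[\tau,T]$ jointly in $(x,t)$; your argument controls only the spatial Lipschitz constant and plain continuity in $t$. The time-Lipschitz bound away from $t=0$ comes from $u,\bar u\in L^\infty_{loc}((0,T];L^\infty)$ in \eqref{spaces for weak sol 1}, which bounds $|\partial_t X|$ on $[\tau,T]$, and this should be said explicitly. Second, the statement that ``integrating $\rho_t+\divv(\rho u)=0$ along $u$-trajectories yields $\rho_0(X(x,0,t))\det\nabla_x X(x,0,t)=\rho(x,t)$'' is not a pointwise computation here: $\rho$ is only a bounded measurable weak solution of \eqref{WF1}, so the Lagrangean representation must be justified by uniqueness of weak solutions of the transport equation for a velocity field with $\nabla u\in L^1((0,T);L^\infty)$ (the pushforward of $\rho_0\,dx$ under the bi-Lipschitz flow is the unique such solution). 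This is standard but is precisely the kind of step a weak-solution uniqueness argument cannot wave at. Neither issue is a flaw in the strategy; both are gaps in justification that are closed by the regularity hypotheses already assumed in \eqref{spaces for weak sol 1} and Proposition~\ref{prop on X}.
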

\begin{proof}
The proof can be found on page 1752 in \cite{hoff06}.
\end{proof}

\section{Proof of Theorem~\ref{Main thm}}\label{proof of main thm section}

We are now ready to give the proof of Theorem~\ref{Main thm}. Throughout this section, $C$ always denotes a generic positive constant which depends on the parameters $P$, $f$, $\lambda$, $\mu$, $T$, $a$, $r$ as described in Theorem~\ref{Main thm}. First, we let $\psi:\R^3\times[0,T]\rightarrow\R^3$ be a test function satisfying
\begin{align}\label{weak form for rho}
&-\intox \rho_0(x)u_0(x)\psi(x,0)dx\notag\\
&=\int_0^T\intox \Big[\rho u\cdot(\psi_t+\nabla\psi u)+(P(\rho)-P_s)\divv(\psi)-\mu\nabla u^j\cdot\nabla\psi^j\notag\\
&\qquad\qquad\qquad-\lambda\divv(u)\divv(\psi)-\lambda(\divv(u)\divv(\psi)+(\rho - \rho_s)f\cdot\psi\Big]dxd\tau,
\end{align}
where we used the expressions \eqref{condition on f} and \eqref{eqn for steady state}$_1$ for $\nabla P_s$. Define $\bar\psi=\psi\circ S^{-1}$. Then we have
\begin{align}\label{weak form for bar rho}
&-\intox \bar{\rho}_0(x)\bar{u}_0(x)\bar{\psi}(x,0)dx\notag\\
&=\int_0^T\intox \Big[\bar{\rho}\bar{u}\cdot(\bar{\psi}_t+\nabla\bar{\psi}\bar{u})+(P(\bar{\rho})-\bar{P_s})\divv(\bar{\psi})-\mu\nabla\bar{u}^j\cdot\nabla\bar{\psi}^j\notag\\
&\qquad\qquad\qquad-\lambda\divv(\bar{u})\divv(\bar{\psi})-\lambda(\divv(\bar{u})\divv(\bar{\psi})+(\bar{\rho} - \bar{\rho_s})\bar{f}\cdot\bar{\psi}\Big]dxd\tau,
\end{align}
with $\bar{P_s}=P(\bar{\rho_s})$. Notice that using the definition of $\bar{F}$ and $\omega$ from \eqref{def of F and omega} (replacing $F$ by $\bar{F}$, $u$ by $\bar{u}$, etc.), 
\begin{align*}
&\intoxt\left[(\bar{P_s}-\bar{P})\divv(\bar\psi)+\mu\nabla\bar{u}^j\cdot\nabla\bar{\psi}^j+\lambda\divv(\bar{u})\divv(\bar{\psi})\right]\\
&=\intoxt\left[(\mu+\lambda)\divv(\bar{u})-\bar{P}+\bar{P_s}\right]\divv(\bar{\psi})+\intoxt\mu(\bar{u}^j_{x_k}-\bar{u}^k_{x_j})\bar{\psi}^j_{x_k}\\
&=-\intoxt[\nabla(\bar{\rho_s}\bar{F})\cdot\psi+\mu\bar{\omega}^{j,k}_{x_k}\psi^j]\\
&\qquad+\int_0^T\intox \left[\nabla(\bar{\rho_s}\bar{F})\cdot(\psi-\psi\circ S^{-1})+\mu\bar{\omega}^{j,k}_{x_k}(\psi^j-\psi^j\circ S^{-1})\right]\\
&=\int_0^T\intox \left[(P_s-P)\divv(\psi)+\mu\nabla\bar{u}^j\cdot\nabla\psi^j+\lambda\divv(\bar{u})\divv(\psi)\right]\\
&\qquad+\int_0^T\intox \left[\nabla(\bar{\rho_s}\bar{F})\cdot(\psi-\psi\circ S^{-1})+\mu\bar{\omega}^{j,k}_{x_k}(\psi^j-\psi^j\circ S^{-1})\right].
\end{align*}
Moreover, we have
\begin{align*}
\intox \bar{\rho}\bar{u}\cdot(\bar{\psi}_t+\nabla\bar{\psi}\bar{u})dx&=\intox \bar{\rho}(S)\bar{u}(S)\cdot(\bar{\psi}_t(S)+\nabla\bar{\psi}\bar{u}(S))|\det(\nabla S)|dx\\
&=\intox A_0\rho\bar{u}(S)(\psi_t+\nabla\psi u)dx,
\end{align*}
where we used the fact that $A_0\rho=(\bar{\rho}\circ S)|\det(\nabla S)|$ from Proposition~\ref{prop on S}. Hence by taking the difference between \eqref{weak form for rho} and \eqref{weak form for bar rho}, for all $\psi$ and $\bar{\psi}$, we have
\begin{align}\label{estimate on weak form for u and rho}
&\intox (\bar{\rho}\bar{u}_0-\rho_0u_0)\cdot\psi(x,0)dx\notag\\
&=\int_0^T\intox \left[\rho(u-\bar{u}\circ S)(\psi_t+\nabla\psi u)+(1-A_0)\rho(\bar{u}\circ S)(\psi_t+\nabla\psi u)\right]\\
&\qquad+\int_0^T\intox \left[(P_s-P)\divv(\psi)+\mu\nabla\bar{u}^j\cdot\nabla\psi^j+\lambda\divv(\bar{u})\divv(\psi)\right]\notag\\
&\qquad+\int_0^T\intox \left[(1-A_0)\rho(\bar{u}\circ S)(\psi_t+\nabla\psi u)\right]\notag\\
&\qquad+\int_0^T\intox \left[\nabla(\bar{\rho_s}\bar{F})\cdot(\psi-\psi\circ S^{-1})+\mu\bar{\omega}^{j,k}_{x_k}(\psi^j-\psi^j\circ S^{-1})\right]\notag\\
&\qquad+\int_0^T\intox (\bar{u}\circ S-\bar{u})(\mu\Delta\psi+\lambda\nabla\divv(\psi))\notag\\
&\qquad+\int_0^T\intox (P-\bar{P})\divv(\psi)+\int_0^T\intox (\bar{P_s}-P_s)\divv(\psi)\notag\\
&\qquad+\int_0^T\intox \left(\bar{\rho_s}(\bar{f}\circ S)\cdot\psi-\rho_s f\cdot\psi\right)\notag.
\end{align}
Next we extend $\rho$, $u$ to be constant in $t$ outside $[0,T]$ and let $\rho^\varepsilon$ and $u^\varepsilon$ be the corresponding smooth approximation obtained by mollifying in both $x$ and $t$. Then we define $\psi^\varepsilon:\R^3\times[0,T]\to\R^3$ to be the solutions satisfying
\begin{align*}
\left\{ \begin{array}
{lr} \rho^\varepsilon(\psi^\varepsilon_t+u^\varepsilon\cdot\nabla\psi^\varepsilon)+\mu\Delta\psi^\varepsilon+\lambda\nabla\divv(\psi^\varepsilon)=G\\ 
\psi^\varepsilon(\cdot,T)=0.
\end{array} \right.
\end{align*}
By simple estimates (or refer to \cite[Lemma~3.1]{hoff06}), $\psi^\varepsilon$ satisfies the following bounds in terms of $G$
\begin{align}\label{bound on psi 1}
&\sup_{0\le \tau\le T}\intox[|\psi^\varepsilon(x,\tau)|^2+|\nabla\psi^\varepsilon(x,\tau)|^2]+\int_0^T\intox[|\psi^\varepsilon_t+\nabla\psi^\varepsilon u^\varepsilon|^2+|D^2_x\psi^\varepsilon|^2]\notag\\
&\le C\int_0^T\intox|G|^2,
\end{align}
\begin{align}\label{bound on psi 2}
\sup_{0\le \tau\le T}\|\psi^\varepsilon(\cdot,\tau)\|_{L^\infty}+\int_0^T\intox|\psi^\varepsilon|^r\le C(G),
\end{align}
for some positive constant $C(G)$ which depends on $G$. We now take $\psi=\psi^\varepsilon$ in \eqref{estimate on weak form for u and rho} to obtain
\begin{align}\label{estimate on weak form for u and rho 2}
\intox (\bar{\rho}\bar{u}_0-\rho_0u_0)\cdot\psi^\varepsilon(x,0)dx=\intoxt z\cdot G+\sum_{i=1}^7\mathcal{R}_i,
\end{align}
where $z=u-\bar{u}\circ S$ and $\mathcal{R}_1,\dots,\mathcal{R}_7$ are given by:
\begin{align*}\
&\mathcal{R}_1=\intoxt\left[\nabla(\bar{\rho_s}\bar{F})\cdot(\psi^\varepsilon-\psi^\varepsilon\circ S^{-1})+\mu\bar{\omega}^{j,k}_{x_k}(\psi^\varepsilon-\psi^\varepsilon\circ S^{-1})\right],\\
&\mathcal{R}_2=\intoxt \left[\rho(f-\bar{f}\circ D)\cdot\psi^\varepsilon+(1-A_0)\rho(\bar{f}\circ S)\cdot\psi^\varepsilon\right],\\
&\mathcal{R}_3=\intoxt(\bar{u}\circ S-\bar{u})\cdot(\mu\Delta\psi^\varepsilon+\lambda\divv(\psi^\varepsilon)),\\
&\mathcal{R}_4=\intoxt z\cdot \left[(\rho-\rho^\varepsilon)\psi^\varepsilon_t+\nabla\psi^\varepsilon(\rho u-\rho^\varepsilon u^\varepsilon)\right],\\
&\mathcal{R}_5=\intoxt(1-A_0)\rho(\bar{u}\circ S)\cdot(\psi^\varepsilon_t+\nabla\psi^\varepsilon u),\\
&\mathcal{R}_6=\intoxt(P-\bar{P})\divv(\psi^\varepsilon)+\int_0^T\intox (\bar{P_s}-P_s)\divv(\psi^\varepsilon)\notag\\
&\mathcal{R}_7=\int_0^T\intox \left(\bar{\rho_s}(\bar{f}\circ S)\cdot\psi^\varepsilon-\rho_s f\cdot\psi^\varepsilon\right)\notag.
\end{align*}
The left side of \eqref{estimate on weak form for u and rho 2} can be readily bounded by 
\begin{align}\label{estimate of LHS}
\Big|\intox (\bar{\rho}\bar{u}_0-\rho_0u_0)\cdot\psi^\varepsilon(x,0)dx\Big|\le\|\rho_0u_0-\bar{\rho}_0\bar{u}_0\|_{L^2}\left(\intoxt|G|^2\right)^\frac{1}{2}.
\end{align}
Following the same method given in \cite{hoff06} and with the help of the bounds \eqref{bound on psi 1}-\eqref{bound on psi 2}, the terms $\mathcal{R}_2$, $\mathcal{R}_3$, $\mathcal{R}_4$ and $\mathcal{R}_5$ can be estimated as follows:
\begin{align}\label{estimate on R2}
|\mathcal{R}_2|\le C\left[\left(\intoxt|f-\bar{f}\circ S|^2\right)^\frac{1}{2}+\|\rho_0-\bar{\rho_0}\|_{L^2{2p}}\right]\left(\intoxt|G|^2\right)^\frac{1}{2},
\end{align}
\begin{align}\label{estimate on R3}
|\mathcal{R}_3|\le C\left(\intoxt|z|^2\right)^\frac{1}{2}\left(\intoxt|G|^2\right)^\frac{1}{2},
\end{align}
\begin{align}\label{estimate on R4}
\lim_{\varepsilon\to0}\mathcal{R}_4=0,
\end{align}
and
\begin{align}\label{estimate on R5}
|\mathcal{R}_5|\le C\left[\|\rho_0-\bar{\rho}_0\|_{L^2}+\left(\intoxt|z|^2\right)^\frac{1}{2}\right]\left(\intoxt|G|^2\right)^\frac{1}{2}.
\end{align}
It remains to estimate the terms $\mathcal{R}_1$, $\mathcal{R}_6$ and $\mathcal{R}_7$. For $\mathcal{R}_1$, using H\"{o}lder inequality, we have
\begin{align*}
|\mathcal{R}_1|&\le C\left(\intoxt|z|^2\right)^\frac{1}{2}\int_0^T t^\frac{1}{2}\|\nabla(\bar{\rho_s}\bar{F})(\cdot,t)\|_{L^4}\|\nabla\psi^\varepsilon(\cdot,t)\|_{L^4}dt\\
&\qquad+C\left(\intoxt|z|^2\right)^\frac{1}{2}\int_0^T t^\frac{1}{2}\|\nabla\bar{\omega}(\cdot,t)\|_{L^4}\|\nabla\psi^\varepsilon(\cdot,t)\|_{L^4}dt\\
&\le C\left(\intoxt|z|^2\right)^\frac{1}{2}\left(\intoxt|G|^2\right)^\frac{1}{2}\left(\intoxt|D^2_x\psi^\varepsilon|^2\right)^\frac{3}{8}\\
&\qquad\times\left(\int_0^T t^\frac{4}{5}(\|\nabla(\bar{\rho_s}\bar{F})(\cdot,t)\|^\frac{8}{5}_{L^4}+\|\nabla\bar{\omega}(\cdot,t)\|^\frac{8}{5}_{L^4})dt\right)^\frac{5}{8}.
\end{align*}
Using \eqref{bound on F and omega in terms of u} and the boundedness assumption \eqref{condition on weak sol6} on $\bar{\rho_s}$, the term involving $\bar{F}$ and $\bar{\omega}$ can be bounded by
\begin{align*}
\intox t^\frac{4}{5}\left(\intox|\dot{\bar{u}}|^4\right)^\frac{3}{5}+C
\end{align*}
and with the help of \eqref{GN1} and the energy estimates \eqref{energy bound on weak sol}, we further have
\begin{align*}
&\int_0^T t^\frac{4}{5}\left(\intox|\dot{\bar{u}}|^4\right)^\frac{3}{5}\\
&\le C\int_0^T t^\frac{4}{5}\left(\intox|\dot{\bar{u}}|^2\right)^\frac{1}{5}\left(\intox|\nabla\dot{\bar{u}}|^2\right)^\frac{3}{5}\\
&\le C\left(\int_0^T t^{4s-3}\right)^\frac{1}{5}\left(\int_0^T t^{1-s}\intox|\dot{\bar{u}}|^2\right)^\frac{1}{5}\left(\int_0^T t^{1-s}\intox|\dot{\bar{u}}|^2\right)^\frac{1}{5}\le CT^\frac{4s-2}{5}.
\end{align*}
Hence we conclude
\begin{align}\label{rough bound on R1}
|\mathcal{R}_1|\le CT^\frac{2s-1}{4}\left(\intoxt|z|^2\right)^\frac{1}{2}\left(\intoxt|G|^2\right)^\frac{1}{2}.
\end{align}
In particular, for $[t_1,t_2]\subseteq[0,T]$, if we define
\begin{equation*}
\mathcal{R}_1(t_1,t_2)=\int_{t_1}^{t_2}\intox\left[\nabla(\bar{\rho_s}\bar{F})\cdot(\psi^\varepsilon-\psi^\varepsilon\circ S^{-1})+\mu\bar{\omega}^{j,k}_{x_k}(\psi^\varepsilon-\psi^\varepsilon\circ S^{-1})\right],
\end{equation*}
then we also have
\begin{equation}\label{estimate on R1}
|\mathcal{R}_1(t_1,t_2)|\le C|t_2-t_1|^\frac{2s-1}{4}\left(\intoxts|z|^2\Big)^\frac{1}{2}\Big(\intoxts|G|^2\right)^\frac{1}{2}.
\end{equation}

For $\mathcal{R}_6$, using the assumption \eqref{condition on P} on $P$, we have \footnote{We point out that the bound \eqref{H -1 bound on P} also holds under the more general condition \eqref{more general condition on P} on the pressure; see \cite{hoff06} for a more detailed proof.}
\begin{align}\label{H -1 bound on P}
\|(P-\bar{P})(\cdot,t)\|_{H^{-1}}\le C\|(\rho-\bar{\rho})(\cdot,t)\|_{H^{-1}}.
\end{align}
Hence together with the bound \eqref{bound on psi 2} on $\psi^\varepsilon$, it implies that
\begin{align}\label{bound on R6 1}
\Big|\intoxt(P-\bar{P})\divv(\psi^\varepsilon)\Big|&\le\int_0^T\|(P-\bar{P})(\cdot,t)\|_{H^{-1}}\|\divv(\psi^\varepsilon)\|_{H^1}d\tau\notag\\
&\le C\sup_{0\le t\le T}\|(\rho-\bar{\rho})(\cdot,t)\|_{H^{-1}}\left(\intoxt|G|^2\right)^\frac{1}{2}.
\end{align}
Following the argument given in \cite{hoff06}, the term $\sup_{0\le t\le T}\|(\rho-\bar{\rho})(\cdot,t)\|_{H^{-1}}$ can be bounded by
\begin{align*}
\sup_{0\le t\le T}\|(\rho-\bar{\rho})(\cdot,t)\|_{H^{-1}}\le C\left[\|\rho_0-\bar{\rho_0}\|_{L^2}+T^\frac{1}{2}\left(\intoxt|z|^2\right)^\frac{1}{2}\right],
\end{align*}
and we conclude from \eqref{bound on R6 1} that
\begin{align}\label{bound on R6 2}
\Big|\intoxt(P-\bar{P})\divv(\psi^\varepsilon)\Big|\le C\left[\|\rho_0-\bar{\rho_0}\|_{L^2}+\left(\intoxt|z|^2\right)^\frac{1}{2}\right]\left(\intoxt|G|^2\right)^\frac{1}{2}.
\end{align}
The term $\int_0^T\intox (\bar{P_s}-P_s)\divv(\psi^\varepsilon)$ can be readily bounded by
\begin{align}\label{bound on R6 3}
\Big|\int_0^T\intox (\bar{P_s}-P_s)\divv(\psi^\varepsilon)\Big|\le C\left(\intox|\rho_s-\bar{\rho_s}|^2\right)^\frac{1}{2},
\end{align}
and therefore the bounds \eqref{bound on R6 2}-\eqref{bound on R6 3} give
\begin{align}\label{estimate on R6}
|\mathcal{R}_6|&\le C\left[\|\rho_0-\bar{\rho_0}\|_{L^2}+\left(\intoxt|z|^2\right)^\frac{1}{2}\right]\left(\intoxt|G|^2\right)^\frac{1}{2}\notag\\
&\qquad+\left(\intox|\rho_s-\bar{\rho_s}|^2\right)^\frac{1}{2}.
\end{align}
Finally, for the term $\mathcal{R}_7$, we can rewrite it as follows.
\begin{align*}
\mathcal{R}_7=\intoxt(\bar{\rho_s}-\rho_s)f\cdot\psi^\varepsilon+\intoxt\bar{\rho_s}(\bar{f}\circ S-f)\cdot\psi^\varepsilon
\end{align*}
The term $\intoxt(\bar{\rho_s}-\rho_s)f\cdot\psi^\varepsilon$ can be bounded by
\begin{align*}
\Big|\intoxt(\bar{\rho_s}-\rho_s)f\cdot\psi^\varepsilon\Big|&\le C\|f\|_{L^\infty}\left(\intox|\rho_s-\bar{\rho_s}|^2\right)^\frac{1}{2}\left(\intoxt|\psi^\varepsilon|^2\right)^\frac{1}{2}\\
&\le C\|f\|_{L^\infty}\left(\intox|\rho_s-\bar{\rho_s}|^2\right)^\frac{1}{2}\left(\intoxt|G|^2\right)^\frac{1}{2},
\end{align*}
and similarly, $\intoxt\bar{\rho_s}(\bar{f}\circ S-f)\cdot\psi^\varepsilon$ can be bounded by
\begin{align*}
\intoxt\bar{\rho_s}(\bar{f}\circ S-f)\cdot\psi^\varepsilon\le C\|\bar{\rho_s}\|_{L^\infty}\left(\intox|\bar{f}\circ S-f|^2\right)^\frac{1}{2}\left(\intoxt|G|^2\right)^\frac{1}{2}.
\end{align*}
Recalling the assumptions \eqref{condition on weak sol3} and \eqref{condition on weak sol6}, we therefore obtain
\begin{align}\label{estimate on R7}
|\mathcal{R}_7|&\le C\left(\intox|\rho_s-\bar{\rho_s}|^2\right)^\frac{1}{2}\left(\intoxt|G|^2\right)^\frac{1}{2}\notag\\
&\qquad+C\left(\intox|\bar{f}\circ S-f|^2\right)^\frac{1}{2}\left(\intoxt|G|^2\right)^\frac{1}{2}.
\end{align}
Combining the estimates \eqref{estimate of LHS}, \eqref{estimate on R2}, \eqref{estimate on R3}, \eqref{estimate on R4}, \eqref{estimate on R5}, \eqref{estimate on R1}, \eqref{estimate on R6} and \eqref{estimate on R7}, we arrive at
\begin{align}\label{estimate on z in T}
\Big|\intoxt z\cdot G\Big|\le C\Big[M_0\Big(\intoxt|G|^2\Big)^\frac{1}{2}+|\mathcal{R}_1(0,T)|\Big],
\end{align}
where $M_0$ is given by
\begin{align*}
M_0&=\|\rho_0-\bar{\rho}_0\|_{L^2}+\|\rho_0u_0-\bar{\rho}_0\bar{u}_0\|_{L^2}+T^{\delta}\Big(\intoxt|z|^2\Big)^\frac{1}{2}\\
&\qquad+\left(\intox|\rho_s-\bar{\rho_s}|^2\right)^\frac{1}{2}+\left(\intox|\bar{f}-\bar{f}\circ S|^2dxdt\right)^\frac{1}{2}
\end{align*}
for some $\delta>0$, and $C>0$ is now fixed. Following the analysis given on page 1758-1759 in Hoff \cite{hoff06}, there exists a small time $\tilde\tau>0$ such that 
\begin{equation*}
\Big(\int_0^{\tilde\tau}\!\!\!\intox|z|^2\Big)^\frac{1}{2}\le 2CM_0,
\end{equation*}
and consequently
\begin{equation*}
|\mathcal{R}_1(0,\tilde\tau)|\le M_0\Big(\int_0^{\tilde\tau}\!\!\!\intox|G|^2\Big)^\frac{1}{2}.
\end{equation*}
By applying \eqref{estimate on z in T} with $T$ replaced by $2\tilde\tau$, we get
\begin{equation*}
\Big(\int_0^{2\tilde\tau}\!\!\!\intox|z|^2\Big)^\frac{1}{2}\le 4CM_0.
\end{equation*}
Since $\tilde\tau>0$ is fixed, we can exhaust the interval $[0,T]$ in finitely many steps to obtain that
\begin{equation*}
\Big(\intoxt|z|^2\Big)^\frac{1}{2}\le CM_0,
\end{equation*}
for some new constant $C>0$. Hence the term $T^{\delta}\Big(\intoxt|z|^2\Big)^\frac{1}{2}$ can be eliminated from the definition of $M_0$ by a Gronw\"{a}ll-type argument. Therefore we conclude that
\begin{align}
\Big|\intoxt z\cdot G\Big|&\le CM_0\Big(\intoxt|G|^2\Big)^\frac{1}{2}.\label{L2 bound on u uniqueness}
\end{align}
Since the bound \eqref{L2 bound on u uniqueness} holds for any $G\in H^\infty(\R^3\times[0,T])$, it shows that $\|z\|_{L^2([0,T]\times\R^3)}$ can be bounded by $M_0$. Finally, using the bound \eqref{spaces for weak sol 1} on the time integral on $\|\nabla\bar{u}\|_{L^\infty}$,
\begin{align*}
\intoxt|\bar{u}-\bar{u}\circ S|^2&\le\int_0^T\|\nabla\bar{u}(\cdot,t)\|^2_{L^\infty}\intox|x-S(x,t)|^2\\
&\le C\intoxt|z|^2,
\end{align*}
and hence \eqref{bound on difference} follows. This finishes the proof of Theorem~\ref{Main thm}.



\bibliographystyle{amsalpha}

\bibliography{References_for_uniqueness_large_potential}

\providecommand{\bysame}{\leavevmode\hbox to3em{\hrulefill}\thinspace}
\providecommand{\MR}{\relax\ifhmode\unskip\space\fi MR }
\providecommand{\MRhref}[2]{%
  \href{http://www.ams.org/mathscinet-getitem?mr=#1}{#2}
}
\providecommand{\href}[2]{#2}
\begin{thebibliography}{Sue20d}

\bibitem[BC94]{BC94}
H.~Bahouri and J.-Y. Chemin, \emph{\'{E}quations de transport relatives \'{a}
  des champs de vecteurs non-lipschitziens et m\'{e}canique des fluides}, Arch.
  Rational Mech. Anal. \textbf{127} (1994), no.~2, 159--181. \MR{1288809}

\bibitem[CS16]{suen16}
Ka~Luen Cheung and Anthony Suen, \emph{Existence and uniqueness of small energy
  weak solution to multi-dimensional compressible {N}avier-{S}tokes equations
  with large external potential force}, J. Math. Phys. \textbf{57} (2016),
  no.~8, 081513, 19. \MR{3536916}

\bibitem[Dan00]{danchin}
R.~Danchin, \emph{Global existence in critical spaces for compressible
  {N}avier-{S}tokes equations}, Invent. Math. \textbf{141} (2000), no.~3,
  579--614. \MR{1779621}

\bibitem[Hof95]{hoff95}
David Hoff, \emph{Global solutions of the {N}avier-{S}tokes equations for
  multidimensional compressible flow with discontinuous initial data}, J.
  Differential Equations \textbf{120} (1995), no.~1, 215--254. \MR{1339675}

\bibitem[Hof02]{hoff02}
David Hoff, \emph{Dynamics of singularity surfaces for compressible, viscous flows
  in two space dimensions}, Comm. Pure Appl. Math. \textbf{55} (2002), no.~11,
  1365--1407. \MR{1916987}

\bibitem[Hof05]{hoff05}
David Hoff, \emph{Compressible flow in a half-space with {N}avier boundary
  conditions}, J. Math. Fluid Mech. \textbf{7} (2005), no.~3, 315--338.
  \MR{2166979}

\bibitem[Hof06]{hoff06}
David Hoff, \emph{Uniqueness of weak solutions of the {N}avier-{S}tokes equations
  of multidimensional, compressible flow}, SIAM J. Math. Anal. \textbf{37}
  (2006), no.~6, 1742--1760. \MR{2213392}

\bibitem[Lio98]{lions}
Pierre-Louis Lions, \emph{Mathematical topics in fluid mechanics. {V}ol. 2},
  Oxford Lecture Series in Mathematics and its Applications, vol.~10, The
  Clarendon Press, Oxford University Press, New York, 1998, Compressible
  models, Oxford Science Publications. \MR{1637634}

\bibitem[LS16]{LS16}
Tong Li and Anthony Suen, \emph{Existence of intermediate weak solution to the
  equations of multi-dimensional chemotaxis systems}, Discrete Contin. Dyn.
  Syst. \textbf{36} (2016), no.~2, 861--875. \MR{3392908}

\bibitem[MN80]{mn1}
Akitaka Matsumura and Takaaki Nishida, \emph{The initial value problem for the
  equations of motion of viscous and heat-conductive gases}, J. Math. Kyoto
  Univ. \textbf{20} (1980), no.~1, 67--104. \MR{564670}

\bibitem[MY01]{MY01}
Akitaka Matsumura and Naruhiko Yamagata, \emph{Global weak solutions of the
  {N}avier-{S}tokes equations for multidimensional compressible flow subject to
  large external potential forces}, Osaka J. Math. \textbf{38} (2001), no.~2,
  399--418. \MR{1833629}

\bibitem[Nas62]{nash}
John Nash, \emph{Le probl\`eme de {C}auchy pour les \'{e}quations
  diff\'{e}rentielles d'un fluide g\'{e}n\'{e}ral}, Bull. Soc. Math. France
  \textbf{90} (1962), 487--497. \MR{149094}

\bibitem[SH12]{suenhoff12}
Anthony Suen and David Hoff, \emph{Global low-energy weak solutions of the
  equations of three-dimensional compressible magnetohydrodynamics}, Arch.
  Ration. Mech. Anal. \textbf{205} (2012), no.~1, 27--58. \MR{2927617}

\bibitem[Sue12]{suen12}
Anthony Suen, \emph{Global low-energy weak solutions of the equations
  of 3{D} magnetohydrodynamics}, ProQuest LLC, Ann Arbor, MI, 2012, Thesis
  (Ph.D.)--Indiana University. \MR{3047180}

\bibitem[Sue13]{suen13b}
Anthony Suen, \emph{Global solutions of the {N}avier-{S}tokes equations for
  isentropic flow with large external potential force}, Z. Angew. Math. Phys.
  \textbf{64} (2013), no.~3, 767--784. \MR{3068849}

\bibitem[Sue14]{suen14}
Anthony Suen, \emph{Global existence of weak solution to {N}avier-{S}tokes equations
  with large external force and general pressure}, Math. Methods Appl. Sci.
  \textbf{37} (2014), no.~17, 2716--2727. \MR{3271118}

\bibitem[Sue20a]{suen20a}
Anthony Suen, \emph{Existence and a blow-up criterion of solution to the 3{D}
  compressible {N}avier-{S}tokes-{P}oisson equations with finite energy},
  Discrete Contin. Dyn. Syst. \textbf{40} (2020), no.~3, 1775--1798.
  \MR{4063944}

\bibitem[Sue20b]{suen20b}
Anthony Suen, \emph{Existence and uniqueness of low-energy weak solutions to the
  compressible 3{D} magnetohydrodynamics equations}, J. Differential Equations
  \textbf{268} (2020), no.~6, 2622--2671. \MR{4047965}

\bibitem[Sue20c]{suen20mhd}
Anthony Suen, \emph{Global regularity for the 3D compressible
  magnetohydrodynamics with general pressure}, arXiv preprint arXiv:2012.02971.

\bibitem[Sue20d]{suen20c}
Anthony Suen, \emph{Some serrin type blow-up criteria for the three-dimensional
  viscous compressible flows with large external potential force}, arXiv preprint arXiv:2011.14581.

\bibitem[Tan77]{tani}
Atusi Tani, \emph{On the first initial-boundary value problem of compressible
  viscous fluid motion}, Publications of the Research Institute for
  Mathematical Sciences \textbf{13} (1977), no.~1, 193--253.

\bibitem[Zie89]{ziemer}
William~P. Ziemer, \emph{Weakly differentiable functions}, Graduate Texts in
  Mathematics, vol. 120, Springer-Verlag, New York, 1989, Sobolev spaces and
  functions of bounded variation. \MR{1014685}

\end{thebibliography}

\end{document}